\documentclass[a4paper,12pt]{amsart}
\usepackage{amsmath,amsthm,amssymb}
\usepackage{mathptmx}
\usepackage{graphicx,graphics}
\usepackage{tikz}
\newtheorem{theorem}{Theorem}[section]

\newtheorem{prop}{Proposition}[section]
\theoremstyle{definition}
\newtheorem{definition}{Definition}[section]

\newtheorem{example}{Example}[section]
\newtheorem{problem}{Problem}[section]

\usepackage{mathrsfs}
\usepackage{enumerate}
\usepackage{float}
\usepackage{graphicx}
\usetikzlibrary{shapes.geometric,arrows}
\usetikzlibrary{mindmap}
\usetikzlibrary{patterns,shapes.misc,positioning}
\numberwithin{equation}{section}
\begin{document}

\author {A V Prajeesh}
\address{Department of Computer Science and Engineering,  Indian Institute of Information Technology Kottayam, Valavoor, Pala, 686635, India.}
 \email{prajeesh@iiitkottayam.ac.in} 

\author{ Krishnan Paramasivam}
\address{Department of Mathematics, National Institute of Technology Calicut, Kozhikode-673601, India.}
\email{sivam@nitc.ac.in}

\title{Enumertaing the distance magic labelings of a distance magic graph}
\subjclass[2020]{Primary 05C78}
% \\ Corresponding Author : prajeesh@iiitkottayam.ac.in}

\keywords{distance magic labeling, distance magic graph, automorphism group, group action, counting }

\begin{abstract}
Let $G = (V,E)$ be a graph of order $n$. A bijection $f : V \rightarrow \{1,2,\cdots,n\}$ is a distance magic labeling of $G$ if there exists a positive integer $k$ such that $\sum_{u \in N(v)}f(u) = k$ for all $v \in V$, where $N(v)$ is the neighborhood of $v$. Any graph which admits a distance magic labeling is called a distance magic graph.  In this article, we give a partial solution to the problem by Rao et al.[10] to predict all distance magic labelings of cartesian product of two cycles, $C_m \Box C_m$, where $m\equiv 2 \mod 4$. Further, we prove that the number of distance magic labelings of a distance magic graph is a multiple $| Aut(G)|$ where $Aut(G)$ is the automorphism group of the distance magic graph $G.$
\end{abstract}

\thanks{}

\maketitle
\pagestyle{myheadings}
\markboth{A.V. Prajeesh and and Krishnan Paramasivam }{Enumerataing the distance magic labelings of a distance magic graph}

\section{Introduction}

\noindent A simple \textit{graph} $G$ is an ordered pair $(V (G),E(G))$, where $V (G)$ is a nonempty finite set and $E(G)$ is a subset of unordered pairs of elements of $V(G)\times V(G)$. The elements of $V(G)$ and $E(G)$ are called the vertices and the edges of $G$, respectively. This manuscript adopts the graph-theoretic notations and terminologies from \cite{MR2368647,MR2817074}.
\par The \textit{union} of two graphs, $H_1$ and $H_2$, is the graph $H_1\cup H_2$ with the vertex set $V(H_1)\cup V(H_2)$ and edge set $E(H_1)\cup E(H_2)$. If $H_1$ and $H_2$ are disjoint, their union is referred to as the \textit{disjoint union}.
The $k^{\textnormal{th}}$ \textit{power} of a graph $G$ is a graph $G^{k}$ with the same set of vertices as $G$, and any two vertices $u$ and $v$ are connected if and only if $d_G(u,v)\leq k$.
The \textit{join} (or \textit{sum}) of two graphs, $H_1$ and $H_2$, is the graph $H_1+H_2$ with the vertex set $V(H_1)\cup V(H_2)$ and the edge set $E(H_1)\cup E(H_2)\cup \{uv: u\in V(H_1), v\in V(H_2)\}$.
% \begin{definition}
% 	The \textit{lexicographic product} of graphs $H_1$ and $H_2$ is the graph $H_1\circ H_2$, whose vertex set is $V(H_1)\times V(H_2)$, and $(h_1,h_2)(h_1',h_2')$ is an edge of $H_1\circ H_2$ if and only if $h_1h_1'\in E(H_1)$, or $h_1=h_1'$, and $h_2h_2'\in E(H_2).$
% \end{definition}
	% The \textit{direct product} of graphs $H_1$ and $H_2$ is the graph $H_1\times H_2$, whose vertex set is $V(H_1)\times V(H_2)$, and $(h_1,h_2)(h_1',h_2')$ is an edge of $H_1\times H_2$ if and only if $h_1h_1'\in E(H_1)$ and $h_2h_2'\in E(H_2).$ 
	The \textit{Cartesian product} of graphs $H_1$ and $H_2$ is the graph $H_1\Box H_2$ with the vertex set is $V(H_1)\times V(H_2)$, and $(h_1,h_2)(h_1',h_2')$ is an edge of $H_1\Box H_2$ if and only if $h_1=h_1'$ and $h_2h_2'\in E(H_2),$ or $h_2=h_2'$ and $h_1h_1' \in E(H_1).$
  Two graphs $G$ and $H$, are isomorphic, written $G\cong H$, if there exists a 
bijection $l : V (G) \rightarrow V (H)$ such that $uv \in E(G)$ if and only if $l(u)l(v) \in E(H)$; such a mapping is  an \textit{isomorphism} between $G$ and $H$. An isomorphism from $G$ to itself is an automorphism. Further, if $Aut(G)$ is the set of all automorphisms of $G$, which forms a group under composition of functions and is called the automorphism group of graph $G$. For further details one can refer \cite{MR2817074,MR1271140} and \cite{MR2583713}. Now, any automorphism of a graph $G$ is a permutation $\rho$ on the set of vertices of $G$ that preserves adjacency.  Let $G$ be graph with vertices $v_1,v_2,\cdots, v_n$. Now for every automorphism $\rho \in Aut(G)$ of the graph $G$, a unique permutation matrix $P_{\rho}$ can be associated with it and a permutation matrix is an $n\times n$ matrix that has exactly one entry of 1 in each row, and each column and all other entries are 0's. Each permutation matrix $P_{\rho}$, represents a permutation of $n$ elements. When $P_{\rho}$ matrix is multiplied with another matrix, say $B$, it results in permuting the rows (when pre-multiplying, to form $P_{\rho}B$) or columns (when post-multiplying, to form $BP_{\rho}$) of the matrix $B$. Consider a permutation $\rho : \{v_1,v_2,\cdots,v_n\} \rightarrow \{v_1,v_2,\cdots,v_n\},$ defined by $$ \rho = \begin{pmatrix}
v_1&v_2&v_3&\cdots&v_n\\
\rho(v_1)&\rho(v_2)&\rho(v_3)&\cdots&\rho(v_n) 
\end{pmatrix}.$$
Now the $n\times n$ permutation matrix $P_{\rho}=(p_{ij})$ is derived by permuting the columns of an $n\times n$ identity matrix. Thus, $p_{ij}=1$ if $\rho(v_i)=v_j$ and $p_{ij}=0$ otherwise. Now, consider a 4-cycle $C_4$. Then  \begin{figure}[htbp]
	\centering
	\includegraphics[width=30mm]{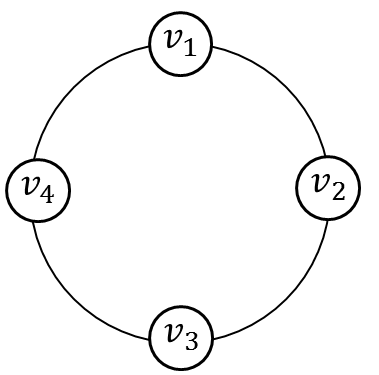}
	\caption{A $4$-cycle}
	\label{Fig2.2}
\end{figure}
$Aut(C_4)=D_8$, the dihedral group, which can represented be by the permutation group, $\mathcal{P}_1= \{\rho_0=\emptyset,\rho_1= (v_2~ v_4),\rho_2= (v_1~ v_2)(v_3~ v_4),\rho_3= (v_1~ v_2~ v_3~ v_4), \rho_4= (v_1~ v_3),\rho_5= (v_1~v_3)(v_2~ v_4),\rho_6=(v_1 ~v_4~ v_3~ v_2),$ $\rho_7= (v_1~ v_4)(v_2~ v_3)\}$.
\par Now, if we consider the automorphism $\rho_2=\begin{pmatrix}
v_1 &v_2&v_3 &v_4\\
v_2&v_1 & v_4&v_{3}
\end{pmatrix}$, then the corresponding permutation matrix $P_{\rho_{2}}$ can be obtained as,
$\begin{bmatrix}
0 & 1 & 0 & 0\\
1 & 0 & 0 & 0\\
0 & 0 & 0 & 1\\
0 & 0 & 1 & 0
\end{bmatrix}$.
\par The following result gives an idea connecting the permutation matrix corresponding to an automorphism of a graph and its adjacency matrix.
\begin{prop}\label{commute}
	\textnormal{\cite{MR1271140}} Let $A$ be an adjacency matrix of a graph $G$, and $\sigma$ a permutation of $V(G)$. Then $\sigma$ is an automorphism of $G$ if and only if $PA=AP$, where $P$ is the permutation representation of $\sigma.$
\end{prop}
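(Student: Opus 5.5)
The plan is to compare the entries of $PA$ and $AP$ directly, using the convention fixed above: $P=(p_{ij})$ with $p_{ij}=1$ if $\sigma(v_i)=v_j$ and $p_{ij}=0$ otherwise. Write $A=(a_{ij})$ with $a_{ij}=1$ exactly when $v_iv_j\in E(G)$. Since each row and each column of $P$ contains exactly one $1$, every entry of a product with $P$ collapses to a single entry of $A$.

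The first step is to compute the $(i,j)$ entry of each product. Writing $\sigma(v_i)=v_{\sigma(i)}$, we get
\[
(PA)_{ij}=\sum_k p_{ik}a_{kj}=a_{\sigma(i)\,j}.
\]
For the other product, $p_{kj}=1$ exactly when $k=\sigma^{-1}(j)$, so
\[
(AP)_{ij}=\sum_k a_{ik}p_{kj}=a_{i\,\sigma^{-1}(j)}.
\]
Hence $PA=AP$ holds if and only if $a_{\sigma(i)\,j}=a_{i\,\sigma^{-1}(j)}$ for all $i,j$.

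The next step is to reindex by setting $j=\sigma(l)$. As $j$ ranges over all indices so does $l$, so the condition becomes $a_{\sigma(i)\,\sigma(l)}=a_{il}$ for all $i,l$. This says precisely that $v_iv_l\in E(G)$ if and only if $\sigma(v_i)\sigma(v_l)\in E(G)$. Since $\sigma$ is already a bijection of $V(G)$, that is exactly the definition of an automorphism, and both directions of the equivalence follow at once.

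There is no real obstacle; the only care needed is bookkeeping of the index convention. With $p_{ij}=1$ iff $\sigma(v_i)=v_j$, left multiplication applies $\sigma$ to the row index while right multiplication applies $\sigma^{-1}$ to the column index, and these must be tracked correctly. A cleaner alternative is to note that $P^{T}=P^{-1}$, so $PA=AP$ is equivalent to $PAP^{T}=A$, and $(PAP^{T})_{ij}=a_{\sigma(i)\sigma(j)}$; this gives the same criterion in one line.
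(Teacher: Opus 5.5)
Your argument is correct. With the paper's convention ($p_{ij}=1$ iff $\sigma(v_i)=v_j$) you correctly get $(PA)_{ij}=a_{\sigma(i)\,j}$ and $(AP)_{ij}=a_{i\,\sigma^{-1}(j)}$, and the substitution $j=\sigma(l)$ turns $PA=AP$ into $a_{\sigma(i)\sigma(l)}=a_{il}$, which is exactly adjacency preservation. The paper gives no proof of its own and only cites the result; the standard proof in the cited source is this same entrywise comparison of $PA$ and $AP$, so your route is essentially the expected one.
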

\par A graph is \textit{prime} \cite{MR2817074} with respect to a given graph product if it is nontrivial and cannot be represented as the product of two nontrivial graphs.
That is, respect to the cartesian product, a nontrivial graph $H$ is said to be prime if $H = H_1\Box H_2$ implies that $H_1$ or $H_2$ is $K_1$. For example, $C_5$ is a prime graph but $C_4 \cong K_2\Box K_2$ is not a prime graph with respect to cartesian product of graphs. The following result gives a characterization regarding the automorphism group of cartesian product of connected prime graphs.
\begin{theorem}\label{Wrea}
	\textnormal{\cite{MR2817074}} The automorphism group of the Cartesian product of connected prime
	graphs is isomorphic to the automorphism group of the disjoint union of the factors.
\end{theorem}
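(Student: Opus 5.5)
The plan is to describe both groups by the same combinatorial data and then match them. Write $G=G_1\Box G_2\Box\cdots\Box G_k$ with each $G_i$ connected and prime, and let $H=G_1\cup G_2\cup\cdots\cup G_k$ be the disjoint union of the factors.

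\textbf{Step 1: the disjoint union.} Since every $G_i$ is connected, the components of $H$ are exactly the $G_i$. An automorphism of $H$ sends components to components. Hence every $\psi\in Aut(H)$ determines two things:
\begin{itemize}
\item a permutation $\pi$ of $\{1,\dots,k\}$ with $G_{\pi(i)}\cong G_i$, namely $\psi(G_{\pi(i)})=G_i$;
\item isomorphisms $\psi_i : G_{\pi(i)}\to G_i$.
\end{itemize}
Conversely, every such pair $(\pi,(\psi_i)_i)$ defines an automorphism of $H$. This part is elementary.

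\textbf{Step 2: the Cartesian product.} I would show that every $\varphi\in Aut(G)$ has the form
$$\varphi(x_1,\dots,x_k)=\bigl(\psi_1(x_{\pi(1)}),\dots,\psi_k(x_{\pi(k)})\bigr)$$
for data $(\pi,(\psi_i)_i)$ of exactly the type in Step 1. The converse direction is routine: a map of this form sends edges, which differ in exactly one coordinate by an edge of some $G_j$, to edges of the same kind. So all the work is in the forward direction.

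\textbf{Step 3: the main obstacle.} The forward direction needs the prime factorization of a connected graph to be canonical. It is not enough that it is unique up to isomorphism (Sabidussi--Vizing). It must be canonical as a partition of $E(G)$ into the $k$ classes of edges parallel to each factor.

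To get this, I would use the Djokovi\'c--Winkler relation $\Theta$ and its transitive closure $\Theta^*$. One then proves that the product relation $\sigma$ is the finest equivalence relation on $E(G)$ that satisfies the square property and contains $\Theta^*$, so that its classes are the edge sets of the $G_i$-layers. The square property here means that two adjacent edges from different classes span a unique chordless square whose opposite edges lie in the same class.

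Primeness of the factors is what guarantees that no class can be split further. Because $\sigma$ is defined purely in terms of adjacency, $\varphi$ preserves it. Hence $\varphi$ permutes the classes, and this gives $\pi$. It also sends each $G_{\pi(i)}$-layer onto a $G_i$-layer.

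\textbf{Step 4: reading off the coordinates.} Next I would show that the induced maps between layers do not depend on which layer is chosen. Take two parallel $G_{\pi(i)}$-layers joined by an edge in another class. The squares between them force $\varphi$ to commute with the canonical projections onto factor coordinates. This yields well-defined isomorphisms $\psi_i$ and the displayed formula from Step 2.

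\textbf{Step 5: the isomorphism.} Define $\Phi: Aut(H)\to Aut(G)$ by sending $\psi$, with data $(\pi,(\psi_i))$, to the product map $\varphi$ above. Steps 2--4 show that $\Phi$ is a bijection. The last check is that it is a homomorphism: composing two such maps gives the data $(\pi\pi',(\psi_i\circ\psi'_{\pi(i)}))$ in both groups, by a direct coordinate computation. This yields $Aut(G)\cong Aut(H)$.
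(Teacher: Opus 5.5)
The paper gives no proof of this statement. It is quoted from \cite{MR2817074}, and your outline follows the standard argument behind that citation. That argument shows every automorphism of a connected graph permutes the prime-factor directions and acts coordinatewise as $\varphi(x)=(\psi_1(x_{\pi(1)}),\dots,\psi_k(x_{\pi(k)}))$, then matches the data $(\pi,(\psi_i))$ with $Aut$ of the disjoint union. Your proposal is correct in outline, with two caveats about what it leaves implicit.

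\textbf{Step 3 is asserted, not proved, and it carries all the content.}
\begin{itemize}
\item For a \emph{finest} equivalence relation $R$ containing $\Theta$ with the square property to exist, the family of such relations must be closed under intersection. This is true: $\Theta\subseteq R$ makes every component of an $R$-class convex, by the lemma that each edge of a shortest path is $\Theta$-related to an edge of any walk with the same ends. Hence a square spanned by two incident edges of one class lies entirely in that class.
\item ``Primeness guarantees no class can be split'' does not follow directly. You must restrict a finer relation to a single $G_i$-layer. The layer is convex, so the restriction still contains $\Theta$ and has the square property, and by transfer along squares it still has at least two classes. You then need the theorem that such a relation actually induces a nontrivial Cartesian factorization of the layer.
\item That theorem is the strong, coordinatized uniqueness of prime factorization, which is where the real difficulty lies.
\end{itemize}

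\textbf{Steps 2--4 give only surjectivity of $\Phi$.} Injectivity needs the easy observation that $\varphi$ determines $(\pi,(\psi_i))$. This holds because prime factors have at least two vertices, so the $i$-th coordinate of $\varphi(x)$ genuinely depends on $x_{\pi(i)}$.
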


\par	Let $\Gamma_1,\Gamma_2,\cdots,\Gamma_n$ be a finite collection of groups. The \textit{external direct
	product} of $\Gamma_1,\Gamma_2,\cdots,\Gamma_n$, written as $\Gamma_1\times \Gamma_2\times\cdots\times \Gamma_n$, is the set of all $n$-tuples for which the $i^{th}$ component is an element of $\Gamma_i$, and the operation is component-wise.
Next, the concepts related to group action and related ideas are provided.
\begin{definition}\label{action}
	\textnormal{\cite{MR0225619}} Let $X$ be a set and $\Lambda$ be a group. An action of $\Lambda$ on $X$ is a map $*: \Lambda \times X \rightarrow X$ such that,
	\begin{enumerate}
		\item $e*x = x$ for all $x \in X$.
		\item $(g_1g_2)*x = g_1*g_2*x $ for all $x \in X$ and all $g_1,g_2 \in \Lambda.$
	\end{enumerate}
\end{definition}
\par And under these conditions, $X$ is a $\Lambda$-set. Further for each $x \in X$, the orbit of $x$ in $X$ under the action of $\Lambda$ on $X$, denoted by $\Lambda x$ is defined as, $\Lambda x = \{g*x: g \in \Lambda\}$. The set $\Lambda_x$ is defined as $\{g\in \Lambda: g*x = x\}$ and $X_g$ is defined as $\{x \in X: g*x = x\}$.
\begin{theorem}\label{Gx}
	\textnormal{\cite{MR0225619}} Let $X$ be a $\Lambda$-set and let $x \in X.$ Then $|\Lambda x|=(\Lambda:\Lambda_x)$. If $|\Lambda|$ is finite, then $|\Lambda x|$ is a divisor of $|\Lambda|.$
\end{theorem}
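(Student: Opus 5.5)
The statement is the orbit–stabilizer theorem, Theorem~\ref{Gx}, quoted from \cite{MR0225619}. I would prove it by building an explicit bijection between the orbit $\Lambda x$ and the set of left cosets of the stabilizer $\Lambda_x$ in $\Lambda$.

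First I check that $\Lambda_x$ is a subgroup of $\Lambda$.
\begin{enumerate}
\item By condition (1) of Definition~\ref{action}, $e\in\Lambda_x$.
\item If $g_1,g_2\in\Lambda_x$, then condition (2) gives $(g_1g_2)*x=g_1*(g_2*x)=g_1*x=x$.
\item If $g\in\Lambda_x$, then $g^{-1}*x=g^{-1}*(g*x)=(g^{-1}g)*x=e*x=x$.
\end{enumerate}
So the index $(\Lambda:\Lambda_x)$, the number of left cosets $g\Lambda_x$, makes sense.

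Next I define $\phi:\{g\Lambda_x : g\in\Lambda\}\to\Lambda x$ by $\phi(g\Lambda_x)=g*x$. Both well-definedness and injectivity come from one chain of equivalences:
\[
g_1*x=g_2*x \iff (g_2^{-1}g_1)*x=x \iff g_2^{-1}g_1\in\Lambda_x \iff g_1\Lambda_x=g_2\Lambda_x .
\]
The first equivalence is obtained by acting with $g_2^{-1}$ (or with $g_2$ for the converse) and using both axioms of Definition~\ref{action}. Reading the chain right to left shows $\phi$ is well defined; reading it left to right shows $\phi$ is injective. Surjectivity is immediate, since every element of $\Lambda x$ has the form $g*x=\phi(g\Lambda_x)$. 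Hence $|\Lambda x|=(\Lambda:\Lambda_x)$.

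Finally, if $\Lambda$ is finite, Lagrange's theorem gives $|\Lambda|=(\Lambda:\Lambda_x)\,|\Lambda_x|$. Therefore $|\Lambda x|$ divides $|\Lambda|$.

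The only step needing care is the chain of equivalences. It must be run in both directions, and each inverse-cancellation step must be justified by the action axioms rather than assumed. Everything else is routine.
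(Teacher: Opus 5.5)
Your proof is correct. It is the standard orbit--stabilizer argument: you check that $\Lambda_x$ is a subgroup, show that $g\Lambda_x \mapsto g*x$ is a well-defined bijection onto $\Lambda x$ via $g_1*x=g_2*x \iff g_2^{-1}g_1\in\Lambda_x$, and finish with Lagrange. The paper gives no proof of its own; it quotes this result from the textbook it cites, where it is proved in essentially the same way.
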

\begin{theorem}\label{countgrp}
	\textnormal{\cite{MR0225619}} Let $\Lambda$ be a finite group and $X$ a finite $\Lambda$-set. If $r$ is the number of orbits in $X$ under $\Lambda$, then
	\begin{equation*}
		r|\Lambda| = \sum_{g\in \Lambda}|X_g|.
	\end{equation*}
\end{theorem}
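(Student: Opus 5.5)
The plan is to count the set
$$S=\{(g,x)\in \Lambda\times X : g*x=x\}$$
in two different ways, once by fixing $g$ and once by fixing $x$, and then compare the two counts.

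First I will fix $g\in\Lambda$. The number of $x$ with $(g,x)\in S$ is exactly $|X_g|$, so $|S|=\sum_{g\in\Lambda}|X_g|$. Next I will fix $x\in X$. The number of $g$ with $(g,x)\in S$ is $|\Lambda_x|$, so $|S|=\sum_{x\in X}|\Lambda_x|$. Since $\Lambda$ is finite, Theorem \ref{Gx} gives $|\Lambda x|=(\Lambda:\Lambda_x)=|\Lambda|/|\Lambda_x|$, which relies on $\Lambda_x$ being a subgroup. Hence
$$|S|=\sum_{x\in X}\frac{|\Lambda|}{|\Lambda x|}.$$

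The remaining step is to regroup this sum by orbits. For that I must check that the orbits partition $X$. Define $x\sim y$ iff $y=g*x$ for some $g\in\Lambda$.
\begin{enumerate}
\item Reflexivity follows from axiom (1) of Definition \ref{action}, since $e*x=x$.
\item Symmetry: if $y=g*x$, then axioms (1) and (2) give $g^{-1}*y=(g^{-1}g)*x=x$.
\item Transitivity: if $y=g_1*x$ and $z=g_2*y$, then axiom (2) gives $z=(g_2g_1)*x$.
\end{enumerate}
The equivalence classes of $\sim$ are precisely the orbits $\Lambda x$, and $\Lambda x=\Lambda y$ whenever $y\in\Lambda x$.

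Now let $O_1,\dots,O_r$ be the distinct orbits. For every $x\in O_i$ the summand $|\Lambda|/|\Lambda x|$ equals $|\Lambda|/|O_i|$. So orbit $O_i$ contributes $|O_i|\cdot|\Lambda|/|O_i|=|\Lambda|$ to the sum, and therefore $|S|=r|\Lambda|$. Equating this with the first count gives the stated identity.

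The only step needing care is that orbits form a partition and that the summand is constant on each orbit; both follow directly from the action axioms as above. Everything else is a straightforward double count.
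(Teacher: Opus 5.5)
Your proof is correct and complete. You count the pairs $(g,x)$ with $g*x=x$ in two ways, apply the orbit--stabilizer relation of Theorem~\ref{Gx}, and regroup the sum by orbits. The paper gives no proof of its own and cites the result from Fraleigh, whose standard proof is essentially this same argument.
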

\par If $\Lambda$ is a group, then the set of all automorphisms of $\Lambda$ with the binary operation as composition, will form a new group called the automorphism group of $\Lambda$. The same is denoted by $Aut\Lambda$.
\begin{definition}
	\textnormal{\cite{MR2583713}}  Given groups $\Lambda_1$ and $\Lambda_2$, and a homomorphism $\gamma : \Lambda_1 \rightarrow Aut\Lambda_2$, the (external) \textit{semi-direct product} $\Lambda_1 \rtimes_{\gamma} \Lambda_2$ of $\Lambda_2$ by $\Lambda_1$ relative to $\gamma$ is the group with underlying set $\Lambda_1 \times \Lambda_2$, and operation $(a_1, k_1)(a_2, k_2) = (a_1a_2, k_1(a_2\gamma )k_2)$, where $a_i \in \Lambda_1$ and $k_i\in \Lambda_2$, for $i = 1, 2.$
\end{definition}  
Suppose $\Lambda_1$ and $\Lambda_2$ are groups, $X = \{1, 2,\cdots, n\}$,
$\Lambda_2$ acts on $X$, and $\Lambda_1^*$ is the direct product of $n$ copies of $\Lambda_1$. We define an action of $\Lambda_2$ on $\Lambda_1^*$ by
$(g_1,\cdots, g_n)*h$ = $(g_{1{*h^{-1}}},\cdots, g_{n{*h^{-1}}})$  for $h \in \Lambda_2.$ It can be seen that the axioms in Definition \ref{action} are satisfied and,
$(g_1,\cdots, g_n)*h(g_1',\cdots , g_n')*h = (g_1g_1,' \cdots , g_ng_n')*h,$ gives a homomorphism.
\par Also, the \textit{wreath product} \cite{MR2583713} of $\Lambda_1$ by $\Lambda_2$ is defined as the semi-direct product $\Lambda_2 \rtimes_{\phi} \Lambda_1^*$
where $\phi$ is the homomorphism given above. The product
is denoted by either $\Lambda_1$ wr $\Lambda_2$ or $\Lambda_1 \wr \Lambda_2$.

\section{Distance magic labeling of graphs}
\noindent In 1994, the concept of  of distance magicness of  graphs, was introduced by Vilfred \cite{vilfred}. He called the same as $\sum$-labeling. Further, Miller et al. \cite{MR1999203} studied the same concept independently and named it as 1-vertex magic vertex labeling. Later, the same labeling was called distance magic labeling  by Sugeng et al. \cite{MR3547016}.

\begin{definition}\label{weight}
	Let $G$ be a graph on $n$ vertices. Consider $l : V(G)\rightarrow \{1,2,\cdots,n\}$ to be a bijection and $x$, any vertex of $G$. Then the \textit{weight} of $x$, denoted by $w_G(x)$, is defined to be the sum of labels of $v$, where the summation is taken over all vertices $v$ of $N_G(x)$, that is, 
	$$ w_{G}(x) = \sum_{v\in N_{G}(x)} l(v).$$
\end{definition}
\begin{definition}
	\cite{MR1999203} Let $G$ be a graph on $n$ vertices. A bijection  $l : V(G)\rightarrow \{1,2,\cdots,n\}$ is a \textit{distance magic labeling} of a graph $G$ if there exists a positive integer $\mu$ such that for any vertex $x$ of $G$,  $w_G(x)=\mu$. A graph $G$ that admits such labeling is called a distance magic graph (or shortly, distance magic graph).
\end{definition} 
A distance magic graph with magic constant 27 is given in Figure \ref{Fig3}.
	\begin{figure}[htbp]
		\centering
		\includegraphics[width=55mm]{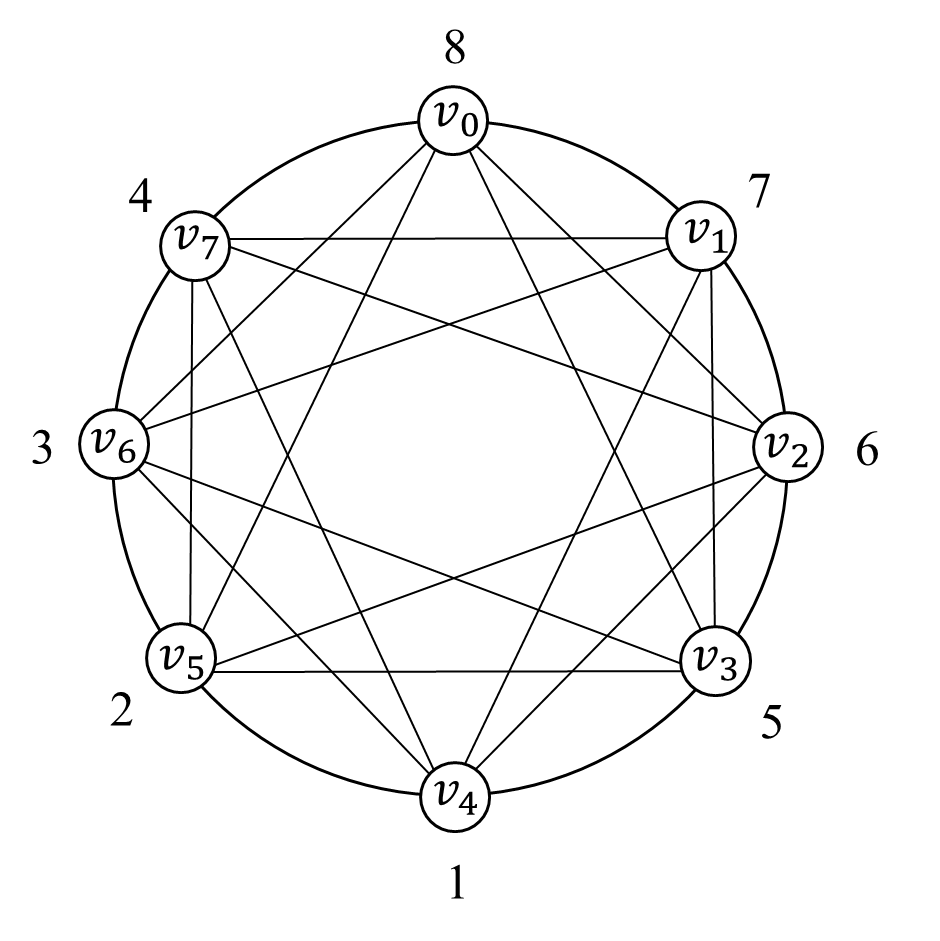}
		\caption{Distance magic graph with $\mu=27$}
		\label{Fig3}
	\end{figure}
 
In 2015, Yasin and Simanjuntak  \cite{yasin} obtained all distance magic graphs on $n$ vertices with $n \leq 9$. There are merely 16 distance magic graphs (up to isomorphism) of order $\le 9$. Which makes it clear that verifying the distance magicness of a general graph is a challenging problem to attempt.  
\par It can be seen that many open problems and conjectures are posted in this domain, but most of them remain still unsolved. In 2004, Rao et al. \cite{Rao2004}
characterized the distance magicness of the Cartesian product of two cycles, $C_n$ and $C_k$, denoted by $C_n \Box C_k$, and posted the following three problems.
\begin{problem}\label{4-reg}
	\textnormal{\cite{Rao2004}} Characterize 4-regular distance magic graphs.
\end{problem}
\begin{problem}\label{prb3}
	\textnormal{\cite{Rao2004}} Characterize graphs $G$ and $H$ for which $G\Box H$ are distance magic.
\end{problem}
\begin{problem}\label{prb}
	\textnormal{\cite{Rao2004}} Find all distance magic labelings of graph $C_m \Box C_n$, where $m = n$ and $m \equiv 2 \bmod 4.$
\end{problem}
However, while the condition given on existence of distance magic labeling for the class $C_m \Box C_n$ by Rao et al. \cite{Rao2004} was sufficient, but not necessary. The corrected result is the following which was proved by Rozman and Sparl in \cite{sparl}.

\begin{theorem}\cite{sparl}
Let $m,n$ be integers with $n \geq m \geq 3$. Then the Cartesian product of cycles
$C_m \Box C_n$ is distance magic if and only if $n = 2m$ with $m$ odd, or $n = m $ with $ m \equiv 2 \mod 4$.
\end{theorem}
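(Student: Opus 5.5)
The plan is to prove necessity by linear algebra on the adjacency matrix, and sufficiency by explicit labelings.

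\emph{Reduction to a kernel vector.} Let $G=C_m\Box C_n$, let $N=mn$ and let $A$ be its adjacency matrix. A bijection $l$ is distance magic exactly when $Al=\mu\mathbf 1$. Since $G$ is $4$-regular, summing all the weights counts each label four times, so $N\mu=4\cdot N(N+1)/2$ and hence $\mu=2(N+1)$. Put $x=l-\frac{N+1}{2}\mathbf 1$. Then $Ax=0$, and $x$ is \emph{injective}: its entries are the distinct numbers $\frac{1-N}{2},\dots,\frac{N-1}{2}$. Conversely, any injective $x\in\ker A$ whose entries form an arithmetic progression with step $1$, suitably shifted, gives a distance magic labeling.

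\emph{The kernel via characters.} I will index vertices by $(a,b)\in\mathbb Z_m\times\mathbb Z_n$. The characters $\chi_{j,k}(a,b)=e^{2\pi i(ja/m+kb/n)}$ are eigenvectors of $A$ with eigenvalue $2\cos(2\pi j/m)+2\cos(2\pi k/n)$. This eigenvalue vanishes iff $\frac jm+\frac kn\equiv\frac12$ or $\frac jm-\frac kn\equiv\frac12 \pmod 1$. Let $S$ be the set of such pairs $(j,k)$; then $\ker A$ is spanned by $\{\chi_{j,k}:(j,k)\in S\}$. If $S=\emptyset$ we are done at once.

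\emph{Necessity.} The key observation is this. Suppose there is a nonzero translation $t=(t_1,t_2)$ with $jt_1/m+kt_2/n\in\mathbb Z$ for every $(j,k)\in S$. Then every kernel vector is invariant under $(a,b)\mapsto(a+t_1,b+t_2)$, so it cannot be injective. The work is therefore a number-theoretic case analysis of $S$. Write $d=\gcd(m,n)$ and take, for instance, $t=(m/d\cdot s,\,-n/d\cdot s')$ or $t=(m/2,n/2)$-type candidates. I would show that such a $t$ exists unless $n=m\equiv 2\pmod 4$ or $n=2m$ with $m$ odd.

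The constraints come from the congruences. Writing $\frac jm\pm\frac kn=\frac12+z$ forces $2(jn\pm km)\equiv mn \pmod{2mn}$, which pins down the $2$-adic valuations of $m$ and $n$ and the common factors that the pairs in $S$ can realize. For example:
\begin{itemize}
\item If $m,n$ are both odd, then $S=\emptyset$, since $jn\pm km\equiv mn/2$ is impossible.
\item If $4\mid m$ and $4\mid n$, the translation $(m/2,n/2)$, or one with a factor $2$ removed, annihilates all of $S$.
\item If $m\neq n$ and $n\neq 2m$, then the pairs in $S$ all lie on a proper sublattice, which admits a nontrivial invariant translation.
\end{itemize}
This case analysis, producing a uniform translation killing all of $S$ in every excluded case, is the step I expect to be the main obstacle. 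The delicate cases are when $\gcd(m,n)$ is large but $n\notin\{m,2m\}$. I would first try the candidate $t=(m/g,\,\pm n/g)$ for suitable divisors $g$, and check it against each residue class of $(m,n)$ modulo $4$.

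\emph{Sufficiency.} For $n=m\equiv2\pmod4$ and for $n=2m$ with $m$ odd, I would give explicit labelings. In both cases $S$ is rich enough to contain a real combination of characters that is injective. Concretely, I would write the labeling as $l(a,b)=\alpha(a,b)+\frac{N+1}{2}$ with $\alpha$ built from two "antipodal-sum" functions along the diagonals $a+b$ and $a-b$. This makes each pair of opposite neighbours sum to a constant, so each weight equals $2(N+1)$. It then remains to verify directly that the labels obtained are exactly $1,\dots,N$.
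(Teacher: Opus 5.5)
The paper gives no proof of this theorem; it quotes it from Rozman and \v{S}parl. Your proposal therefore has to stand on its own, and its ``if'' half has a genuine gap. No labeling is produced, and the mechanism you describe cannot produce one. Suppose that for a fixed pair of neighbour directions $e\neq e'$ one had $l(v+e)+l(v+e')=c$ for all $v$. Then $l(w)+l(w+e'-e)=c$ for all $w$, so $l$ would be invariant under the translation $2(e'-e)$, which is nonzero in both families; hence $l$ is not injective. Similarly, an untwisted $\alpha=f(a+b)+g(a-b)$ lies in $\ker A$ only if $f(u+4)=f(u)$ and $g(u+4)=g(u)$. Together with period $m$ (where $m\equiv 2\pmod 4$, resp.\ $m$ odd), this leaves $\alpha$ with at most two values. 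The missing ingredient is a sign twist. Your characters in $S$ show that in both families
\[
\ker A=\bigl\{(-1)^b\bigl(F(a-b)+G(a+b)\bigr) : F,G\colon \mathbb{Z}_m\to\mathbb{R}\bigr\}.
\]
Each summand has pairwise-cancelling neighbour values, but for two \emph{different} pairings of the four neighbours, so no single pairing gives constant sums for $l$. The map $(a,b)\mapsto(a-b,a+b)$ is $2$-to-$1$, with fibres $\{(a,b),(a+\tfrac m2,b+\tfrac m2)\}$, resp.\ $\{(a,b),(a,b+m)\}$. The factor $(-1)^b$ has opposite signs on each fibre exactly because $m/2$ is odd, resp.\ $m$ is odd; this is where the hypotheses enter. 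It then suffices that $F(r)+G(s)$ runs once through $\tfrac12,\tfrac32,\dots,\tfrac{N-1}2$ over the image. For $m=n=2p$ take $2G(s)=2\lfloor s/2\rfloor+1$ and $2F(r)=2p\lfloor r/2\rfloor+2p^2\,(r\bmod 2)$. For $n=2m$ take $2F(r)=2mr$ and $2G(s)=2s+1$, with $0\le r,s<m$. Then $l=\tfrac{N+1}2+\alpha$ is a distance magic labeling.

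Your ``only if'' strategy, by contrast, is sound, and the case analysis does close. It does not close with the candidates you propose: for $(m,n)=(4,8)$, the pair $(1,2)\in S$ pairs with $t=(m/2,n/2)=(2,4)$ to $\tfrac32\notin\mathbb{Z}$. Write $d=\gcd(m,n)$, $m=dm'$, $n=dn'$. Then $(j,k)\in S$ if and only if $jn'\pm km'\equiv dm'n'/2\pmod{dm'n'}$. Reducing modulo $n'$ and $m'$ gives the cases:
\begin{enumerate}
\item[(i)] If $d$ is even and $m\neq n$, then $n'\mid k$, so $t=(0,d)$ works.
\item[(ii)] If $d$ is odd and $n'$ is even, then $m'\mid j$ and $n'\mid 2k$. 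So $t=(d,0)$ or $(0,2d)$ works, unless $m'=1$ and $n'=2$, i.e.\ $n=2m$ with $m$ odd.
\item[(iii)] If $d$ is odd and $m'$ is even, then $n'\ge 3$ and $n'\mid k$, so $t=(0,d)$ works.
\item[(iv)] If $m=n\equiv 0\pmod 4$, then $t=(m/2,m/2)$ works.
\end{enumerate}
Together with your observation that $S=\emptyset$ when $m,n$ are both odd, this covers every excluded pair.
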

The authors also addressed Problem \ref{prb} in \cite{sparl} by characterizing all distance magic labelings of such graphs in terms of the existence of appropriate sequences of integers used as labelings. Moreover, they also established a
the following result as the lower bound for the number of all distance magic labelings of $C_m \Box C_{2m}$ with $m \geq 3$.
\begin{prop}\cite{sparl}
    Let $m \geq 3$ be an odd integer and let $\Gamma=C_m \square C_{2 m}$. Then there are at least $4 m^2 \cdot\left(2^{m+1}-3\right) \cdot(m-1)!^2$ different distance magic labelings of $\Gamma$.
\end{prop}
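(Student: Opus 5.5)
The plan is to construct an explicit large family of distance magic labelings of $\Gamma=C_m\square C_{2m}$ and then count it carefully. Write $V(\Gamma)=\mathbb{Z}_m\times\mathbb{Z}_{2m}$, where $(i,j)$ is adjacent to $(i\pm1,j)$ and $(i,j\pm1)$. The graph is $4$-regular with $n=2m^2$ vertices. Summing all weights shows that every distance magic labeling has magic constant $\mu=4(n+1)/2=2(2m^2+1)$. The condition to be realised is therefore
$$f(i+1,j)+f(i-1,j)+f(i,j+1)+f(i,j-1)=2(2m^2+1)\quad\text{for all }(i,j).$$

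\textbf{Step 1: a normal form for the labelings.} I would look for labelings that split the weight into two partial sums which cancel in a controlled way. Concretely, I would use a change of coordinates adapted to $m$ odd: since $\gcd(m,2)=1$, $\mathbb{Z}_m\times\mathbb{Z}_{2m}$ is cyclic with diagonal generator $(1,1)$. The sought labeling has the shape $f(i,j)=\alpha(\cdot)\,m+\beta(\cdot)+\text{const}$, where the two arguments are linear forms in $i,j$ (for instance $i+j$ and $i-j$, suitably reduced). With this shape, $f$ is a bijection onto $\{1,\dots,2m^2\}$ exactly when the pair of arguments determines $(i,j)$ and $\alpha,\beta$ run bijectively over appropriate digit sets. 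The magic condition then splits into two independent conditions, one on $\alpha$ and one on $\beta$. Each is essentially an ``antipodal'' condition $\alpha(t+1)+\alpha(t-1)=\text{const}$ read along the diagonals, possibly up to a global sign.

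\textbf{Step 2: solve the sequence conditions and count them.} Each antipodal condition should be satisfied by choosing, essentially freely, an ordering of $m-1$ of the values of the digit sequence, with the remaining values forced. This is the source of the factor $(m-1)!^2$, one $(m-1)!$ for $\alpha$ and one for $\beta$. I would then allow, on each of the $m+1$ relevant diagonal classes (the $m$ classes plus one global one), an independent binary choice of complementing the digit, $x\mapsto c-x$, that preserves the magic sum. This gives $2^{m+1}$ patterns. Among these, I expect the constant (all-same) patterns to coincide in pairs, or to be over-counted, with other choices; removing this overlap should leave $2^{m+1}-3$ genuinely distinct patterns.

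\textbf{Step 3: the $4m^2$ factor.} I would compose the family with the symmetries of $\Gamma$ that move the chosen coordinate system. These are the translations $(i,j)\mapsto(i+a,j+b)$, giving $2m^2$ maps, together with one extra reflection, which doubles this to $4m^2$. Any automorphism of $\Gamma$ maps distance magic labelings to distance magic labelings, since it preserves neighbourhoods. It remains to show that these $4m^2$ images of the family are pairwise distinct and do not collide with the symmetries already absorbed in Steps 1--2. For this I would look at the position of the label $1$ together with the labels of its neighbours. The remaining automorphisms of $\Gamma$ (the other reflections) should already be accounted for inside the $(m-1)!^2$ and sign choices, which explains why only $4m^2$ out of $|Aut(\Gamma)|$ appears.

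\textbf{Main obstacle.} The hard part is the injectivity bookkeeping in Steps 2--3: proving that different parameter choices give different labelings, and pinning down exactly which sign patterns collide to produce the $-3$. I would first test the construction by hand for $m=3$ ($C_3\square C_6$) to fix the precise digit sets and sign rules. After that, I would prove distinctness by reading off $\alpha$, $\beta$ and the sign pattern from $f$ via $f\bmod m$ and $\lfloor (f-1)/m\rfloor$ along the diagonals.
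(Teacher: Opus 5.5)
The paper gives no proof of this proposition. It is quoted from Rozman and \v{S}parl, and the paper's own orbit-counting result (Theorem~\ref{thm3}) only yields $|\mathcal{F}^d|=8km^2$, so your plan has to stand on its own. It does not, because the ansatz of Step~1 cannot produce a single labeling. First, $\mathbb{Z}_m\times\mathbb{Z}_{2m}$ is not cyclic: $(1,1)$ has order $2m$. Diagonals and antidiagonals are $2m$-cycles, and each diagonal meets each antidiagonal exactly in the antipodal pair $(i,j),(i,j+m)$. So $(i+j \bmod m,\ i-j \bmod m)$ takes only $m^2$ values, and no function $m\alpha(\cdot)+\beta(\cdot)+c$ of these arguments is injective. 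Second, conditions of the form $\alpha(t+1)+\alpha(t-1)=C$ force $\alpha$ to be constant. Indeed $\alpha(t+2)=C-\alpha(t)$, and $t\mapsto t+2$ has orbits of odd length $m$ (on $\mathbb{Z}_m$ and on a $2m$-cycle alike), so $\alpha=C/2$ and your family is empty. The missing idea is a sign. The eigenvalues of the adjacency matrix are $2\cos(2\pi a/m)+2\cos(\pi b/m)$, which vanish exactly when $b\equiv m\pm 2a \pmod{2m}$. Hence the kernel consists of the functions $(i,j)\mapsto(-1)^j\bigl(\alpha(i+j)+\beta(i-j)\bigr)$, with $\alpha,\beta:\mathbb{Z}_m\to\mathbb{R}$ and arguments read mod $m$. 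Every distance magic labeling is therefore $f(i,j)=m^2+\tfrac12+(-1)^j\bigl(\alpha(i+j)+\beta(i-j)\bigr)$, and \emph{every} such $f$ has constant weight. The only constraint is bijectivity: $\{|\alpha(s)+\beta(d)|:s,d\in\mathbb{Z}_m\}=\{\tfrac12,\tfrac32,\dots,m^2-\tfrac12\}$.

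In light of this, your accounting in Steps~2--3 is reverse-engineered from the target rather than derived. Nothing is forced: all $m!$ orderings of each sequence occur, and $(m-1)!^2=m!^2/m^2$, where the $m^2$ cyclic shifts are realised by translations. Translation by $(0,m)$ gives $f\mapsto 2m^2+1-f$, and $(i,j)\mapsto(i,-j)$ swaps $\alpha$ and $\beta$; that is where $4m^2$ really comes from. The factor $2^{m+1}-3$ is not $m+1$ independent complementations. Put $X=2\alpha(\mathbb{Z}_m)$, $Y=2\beta(\mathbb{Z}_m)$, $h=(m-1)/2$, and let $k$ range over $|k|\le h$; one needs $\{|x+y|\}=\{1,3,\dots,2m^2-1\}$. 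Four families work, each with one set a symmetric progression and all $m$ signs of the other set free:
(i) $Y=\{2k\}$, with $X$ having absolute values $m,3m,\dots,(2m-1)m$;
(ii) $X=\{2mk\}$, with $Y$ having absolute values $m^2-m+1,\dots,m^2+m-1$;
(iii) $X=\{4mk\}$, with $Y$ having absolute values $1,3,\dots,2m-1$;
(iv) $Y=\{4k\}$, with $X$ having absolute values $|4mk+1|$.
The resulting $4\cdot 2^m$ patterns are closed under complementation. Up to shift, the only coincidences are one complementary pair in each of (i)$\cap$(ii), (i)$\cap$(iii) and (iii)$\cap$(iv), leaving $2^{m+1}-3$ patterns up to complementation. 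Combined with the factors above, this gives exactly the stated bound. Producing such a construction and proving this overlap count is the actual content of the proof, and your proposal leaves both to expectation.
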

However, we give a different approach to solve the same problem in the next section.
%------------------------------------------------
\section{Counting the number of distance magic labelings of a distance magic graph}
 A generalization of the Problem \ref{prb} is to find all the distance magic labelings of a distance magic graph. This section attempts to provide a partial solution to these problems. For more details, see \cite{prajeesh2021}. 
 Since a labeling of a graph $G$ with $n$ vertices, is a bijection from $V(G)$ to $\{1,2,\cdots,n\}$, there are a total of $n!$ number of labelings (or bijections) possible for $G$. Let $\mathcal{F}$ be the set of all labelings of a graph $G$. Also, $\mathcal{F}^d \subseteq \mathcal{F}$ be the set of all distance magic labelings of $G$. Let $A$ be the $n\times n$ adjacency matrix of $G$. Now, if $G$ is a distance magic graph, then there exists an $l\in \mathcal{F}$ with a magic constant $\mu$.
The same $l$ can be represented as a column matrix ${X}=[l(v_1) l(v_2) \cdots l(v_n)]^T$ where $l(v_i)\in \{1,2,\cdots,n\}$ with $l(v_i)\neq l(v_{j})$ whenever $i \neq j$.
Also, we can see that, if $X$ represents a distance magic labeling of $G$, then 
\begin{equation*}
AX = \mu \bar{1},
\end{equation*}
where $\mu$ is the magic constant of $G$ and $\bar{1}=[1~ 1~\cdots ~1]^T$ is $n\times 1$ column matrix.
\par \noindent Thus, if
\begin{eqnarray*}
	AX &=& \mu\bar{1}
\end{eqnarray*}
then 
\begin{eqnarray*}
	P_{\rho}AX &=& P_{\rho}\mu\bar{1}=\mu \bar{1}.
\end{eqnarray*}
Here, $P_\rho$ is a permutation matrix corresponding to an element $\rho \in Aut(G).$ From Proposition \ref{commute}, these $P_{\rho}$ matrices commute with $A$. That is, $P_{\rho}A=AP_{\rho}$. Hence,
\begin{eqnarray*}
	AP_{\rho}X &=& \mu\bar{1}
\end{eqnarray*}
or
\begin{eqnarray*}
	AY &=& \mu\bar{1}.
\end{eqnarray*}

\noindent Thus, $Y$ corresponds to a new labeling of the graph $G$. 
\par Note that these permutation matrices are invertible matrices.
Fix a distance-magic labeling $X$. For every $\sigma \in \operatorname{Aut}(G)$, we define
$$
Y_\sigma=P_\sigma X.
$$
Since $\sigma$ preserves adjacency, $Y_\sigma$ is again a distance-magic labeling.
Moreover, if
$$
P_{\sigma_1} X=P_{\sigma_2} X
$$
then
$$
P_{\sigma_2}^{-1} P_{\sigma_1} X=X.
$$
Because $X$ assigns distinct labels to vertices, the only automorphism that can leave $X$ unchanged is the identity. Hence
$$
\sigma_1=\sigma_2.
$$
Therefore the map
$$
\sigma \longmapsto P_\sigma X
$$
is injective, and
$$
\left|\mathcal{F}^d\right| \geq|\operatorname{Aut}(G)| .
$$
Consequently, for each automorphism in $Aut(G)$, we obtain a distance magic labeling $Y$. Hence, there are at least $|Aut(G)|$ number of different distance magic labelings for the given graph $G$, or equivalently,  $|\mathcal{F}^d|\geq |Aut(G)|$.
\par  The following theorem determines the number of distance magic labelings of a distance magic graph.
\begin{theorem}\label{thm3}
	Let $G$ be a distance magic graph and $\mathcal{F}^d$ be the set of all distance magic labelings of $G$ and $*:Aut(G) \times \mathcal{F}^d  \rightarrow \mathcal{F}^d$, be a group action defined by $\rho*l =  l\circ \rho^{-1}$, where $\lq\circ$' is  the composition of $l$ and $\rho^{-1}$. Then $|\mathcal{F}^d| = k|Aut(G)|$, for some $k \geq 1.$ Moreover, $k$ is the number of non-equivalent distance magic labelings of $G$.
\end{theorem}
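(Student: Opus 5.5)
We first check that the map $*$ really is a group action of $Aut(G)$ on $\mathcal{F}^d$. Then we show the action is free, i.e.\ every stabilizer is trivial. The orbit--stabilizer relation of Theorem \ref{Gx} then forces every orbit to have exactly $|Aut(G)|$ elements. Since the orbits partition $\mathcal{F}^d$, its size is $k|Aut(G)|$, where $k$ is the number of orbits. Two labelings are non-equivalent exactly when they lie in different orbits.

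\emph{Well-definedness.} Let $l\in\mathcal{F}^d$ with magic constant $\mu$ and $\rho\in Aut(G)$. The map $l\circ\rho^{-1}$ is a composition of bijections, so it is a bijection $V(G)\to\{1,\dots,n\}$. For any vertex $x$, the automorphism $\rho^{-1}$ maps $N(x)$ bijectively onto $N(\rho^{-1}(x))$. Hence
\[
\sum_{u\in N(x)} l(\rho^{-1}(u))=\sum_{w\in N(\rho^{-1}(x))} l(w)=\mu .
\]
So $\rho*l\in\mathcal{F}^d$, with the same magic constant. This is the function version of the matrix computation $AP_\rho X=\mu\bar 1$ given before the theorem. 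The axioms of Definition \ref{action} are immediate: $e*l=l$, and
\[
(\rho_1\rho_2)*l=l\circ\rho_2^{-1}\circ\rho_1^{-1}=\rho_1*(\rho_2*l).
\]

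\emph{Freeness.} Suppose $\rho*l=l$, so $l\circ\rho^{-1}=l$. Since $l$ is injective, $\rho^{-1}(v)=v$ for every $v$, and therefore $\rho$ is the identity. Thus $Aut(G)_l=\{e\}$ for every $l$. Theorem \ref{Gx} gives $|Aut(G)\,l|=(Aut(G):\{e\})=|Aut(G)|$. Alternatively, Theorem \ref{countgrp} gives the same count directly: $|X_\rho|=0$ for $\rho\ne e$, so $r|Aut(G)|=|\mathcal{F}^d|$.

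\emph{Conclusion.} Since $G$ is distance magic, $\mathcal{F}^d\neq\emptyset$, so the number of orbits $k$ is at least $1$. Summing orbit sizes gives $|\mathcal{F}^d|=k|Aut(G)|$. We call two distance magic labelings equivalent when one is obtained from the other by an automorphism, that is, when they lie in the same orbit. Under this definition $k$ is precisely the number of non-equivalent labelings.

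The argument is routine, and its only potentially delicate point is the freeness step. That step relies on injectivity of the labeling, which is exactly where bijectivity of distance magic labelings is used.
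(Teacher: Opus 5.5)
Your proposal is correct and follows essentially the same route as the paper. Both arguments check that $l\circ\rho^{-1}$ is again distance magic and that $*$ satisfies the action axioms, use injectivity of $l$ to show every stabilizer is trivial, and then count with orbit--stabilizer (Theorem \ref{Gx}) and Burnside's formula (Theorem \ref{countgrp}) to get $|\mathcal{F}^d|=k|Aut(G)|$ with $k$ the number of orbits; your version is slightly cleaner in writing out the neighborhood-sum computation and in calling the key property freeness rather than faithfulness, as the paper does.
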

\begin{proof}
	Let $l$ be a distance magic labeling of $G$ with magic constant $\mu$. Clearly, if $\rho$ is an automorphism of $G$, such that $\rho(v)=u,$ then $\rho$ maps $N_G(v)$ onto $N_G(u)$. Hence, if we define a new labeling, $l'(x) = l\circ \rho^{-1}(x)$ $\forall$ $x \in V(G)$, then it will be a distance magic labeling of $G$. \par Clearly, if we define $l_1 = l \circ \rho_1^{-1}$ and $l_2=l \circ \rho_2^{-1}$, then, $l_1=l_2$ implies, $l \circ \rho_1^{-1} = l \circ \rho_2^{-1}$,  or, $l \circ \rho_1^{-1} \circ \rho_1= l \circ \rho_2^{-1}\circ \rho_1$, or,  $l = l \circ (\rho_2^{-1}\circ \rho_1)$. That is, $\rho_2^{-1}\circ \rho_1 =  \rho_0$, or, $\rho_1 =  \rho_2$.
	\par Define $*:Aut(G) \times  \mathcal{F}^d \rightarrow  \mathcal{F}^d$, as $\rho*l =  l\circ \rho^{-1}$, where '$\circ$' represents the composition of $l$ and $\rho^{-1}$. Clearly, this group action is well defined. Let $\rho_1=\rho_2$ and $l_1=l_2$. Since $\rho_1,\rho_2\in Aut(G)$, we have that $\rho_1^{-1}=\rho_2^{-1}$.  Thus, $$\rho_1* l_1=l_1\circ \rho_1^{-1}=l_1\circ \rho_2^{-1}=l_2\circ \rho_2^{-1}=\rho_2* l_2.$$ \par Now, let $\rho_{0}$ be the identity in $Aut(G)$. Also, we have
	\begin{enumerate}
		\item[(a)] $\rho_{0}*l=l\circ \rho^{-1}_{0} = l,$ for all $l\in \mathcal{F}^d$.
		\item[(b)] $\rho\sigma*l=l\circ(\rho\sigma)^{-1} = l\circ (\rho \circ \sigma)^{-1} = (l\circ \sigma^{-1})\circ \rho^{-1} = (\sigma *l)\circ \rho^{-1} = \rho*\sigma*l$, for all $l\in \mathcal{F}^d$ and for all $\rho,\sigma \in Aut(G).$
	\end{enumerate}
	Clearly, $ \mathcal{F}^d$ is a $Aut(G)$-set. For any $l \in \mathcal{F}^d,$ define $Aut(G)_{l} =\{\rho \in Aut(G): \rho*l = l\}.$ Next, we claim that $Aut(G)$ acts faithfully on $ \mathcal{F}^d$. So it suffices to prove that $\rho_{0}$ is the only element  of $Aut(G)$ that leaves every element $l$ of $\mathcal{F}^d$ fixed.
	\par	If $\rho = \rho_{0}$, then $\rho_{0}*l = l\circ \rho^{-1}_{0} = l.$ Also, if $\rho*l = l\circ \rho^{-1} = l$,  which implies $ l^{-1}\circ (l\circ \rho^{-1}) = \rho_0$, or equivalently, $\rho = \rho_{0}.$	Hence, $Aut(G)_{l}=\{\rho_0\}.$ \par Now, we define a relation on $ \mathcal{F}^d$ as, $l_1 \sim l_2$ if and only if there exists $\rho \in Aut(G)$ such that $\rho * l_1 = l_2.$	
	Here for each $l \in \mathcal{F}^d$, we have $\rho_0*l = l$. Also, if $l_1 \sim l_2$, then there exists $\rho \in Aut(G)$ such that $l_2 = \rho*l_1$. That is,
	$l_2 = l_1 \circ \rho^{-1}$, or $l_2 \circ \rho = l_1 $. Hence $l_2 \sim l_1$. Further, if $l_1 = \rho_1*l_2$ and $l_2 = \rho_2 * l_3,$ then $l_1 =l_2 \circ \rho_1^{-1}= l_3 \circ \rho_2^{-1} \circ \rho_1^{-1} = l_3 \circ (\rho_1\rho_2)^{-1} = \rho_1\rho_2 * l_3$. Therefore, we get that $\sim$ is an equivalence relation on $\mathcal{F}^d$. The orbit of $l$ in $ \mathcal{F}^d$ under $Aut(G)$ is defined as, $Aut(G)l$ = $\{l*\rho: \rho\in Aut(G)\}$. Now two distance magic labelings $l_1$ and $l_2$ are equivalent if and only if the equivalent class generated by $l_1$ is equal to the equivalence class generated by $l_2$.
	\par By Theorem \ref{Gx}, we have $|Aut(G)l|=(Aut(G):Aut(G)_{l})=|Aut(G)|$.
	Now, we can see that $\mathcal{F}^d_{\rho_0} =  \mathcal{F}^d$ and for all $\rho \neq \rho_{0}$, $\mathcal{F}^d_{\rho} = \phi.$ Hence using Theorem \ref{countgrp}, we have 
	\begin{equation*}
	k|Aut(G)| = \sum_{\rho\in Aut(G)}|\mathcal{F}^d_{\rho}| = |\mathcal{F}^d|,
	\end{equation*}
	where $k$ is the number of orbits in $ \mathcal{F}^d$ under $Aut(G)$. Now, since $G$ is distance magic, there exists at least $|Aut(G)|$ number of distance magic labelings for a distance magic graph.  Now, for each distance magic labeling $l$, the orbit of $l$ contains $|Aut(G)|$ number of labelings. Hence, if $k$ is the number of non-equivalent distance magic labelings of a graph $G$, then we have $k$ orbits each having $|Aut(G)|$ number of distance magic labelings in it. Hence, we have $|\mathcal{F}^d| = k.|Aut(G)|$.
\end{proof}
\begin{example}
	Consider the graph $G \cong C_4 \cup P_3$. 
	\begin{figure}[htbp]
		\centering
		\includegraphics[width=75mm]{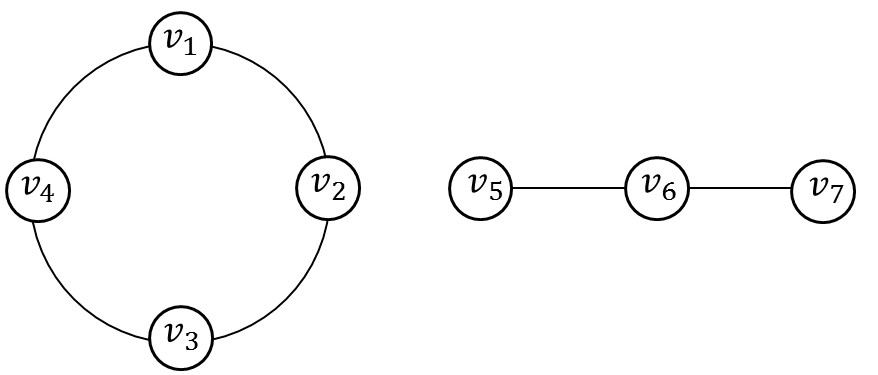}
		\label{Fig3.11}
		\caption{$C_4\cup P_3$}
	\end{figure}
	Then the automorphism group of $G$ is   \\ $\mathcal{P}_2 = \{\rho_{0}=\phi, \rho_1=(v_5~ v_7), \rho_2=(v_2~ v_4), \rho_3=(v_2~ v_4)(v_5~ v_7), \rho_4= (v_1~ v_2)(v_3~ v_4),\\ \rho_5=(v_1~ v_2)(v_3 ~v_4)(v_5~ v_7), \rho_6=(v_1~ v_2~ v_3~ v_4), \rho_7=(v_1~ v_2~ v_3~ v_4 )(v_5~ v_7), \rho_8=(v_1~ v_3), \\ \rho_9=(v_1 ~v_3)(v_5~ v_7), \rho_{10}=(v_1~ v_3)(v_2~ v_4), \rho_{11}=(v_1~ v_3)(v_2~ v_4)(v_5~ v_7), \rho_{12}=(v_1~ v_4~ v_3~ v_2),$ $\rho_{13}=(v_1~ v_4~ v_3 ~v_2)(v_5~ v_7), \rho_{14}= (v_1 ~v_4)(v_2~ v_3),$ $ \rho_{15}=(v_1~ v_4)(v_2~ v_3)(v_5~ v_7)\}$.  
	\vskip ,1cm \noindent Here, the number of non-equivalent distance magic labelings is 3, which are $l,m$ and $p.$
	\begin{enumerate}
		\item[(a)] $l(v_1)=6, l(v_2)=5, l(v_3)=1, l(v_4)=2, l(v_5)=4, l(v_6)=7, l(v_7)=3.$
		\item[(b)] $m(v_1)=5, m(v_2)=4, m(v_3)=2, m(v_4)=3, m(v_5)=6, m(v_6)=7, m(v_7)=1$.
		\item[(c)] $p(v_1)=4, p(v_2)=1, p(v_3)=3, p(v_4)=6, p(v_5)=5, p(v_6)=7, p(v_7)=2$.
	\end{enumerate}
	Under the action of $Aut(G)$ on the vertex set of $G$, the three orbits are $\{v_1,v_2,v_3,v_4\}, \\ \{v_5,v_7\}$ and $\{v_6\}$. One can see that the magic constant of $G$ is 7. Since $v_5$ and $v_7$ have a neighbour as $v_6$, the label of the vertex $v_6$ is 7, and the sum of the labels of $v_5$ and $v_7$ has to be 7. This can be obtained in three different ways $l(v_5) = 4, l(v_7)=3$; $m(v_5)=6, m(v_7)=1$ and $p(v_5)=5, p(v_7)=2.$ That is, 7 can be partitioned in to three distinct ways as $4+3$, $6+1$ and $5+2$.
	\par  Since $v_5$ and $v_7$ are in the orbit $\{v_5,v_7\}$, under the action defined in the above theorem, never $m$ can be obtained from $l$ using any automorphism. Hence they are two non-equivalent labelings. \par Similarly, we obtain only three such non-equivalent labelings. So all the other labelings are not distance magic labelings of $G.$
	\par  The total number of labelings possible are 7! = 5040. But the number of distance magic labelings is only $|\mathcal{F}^d| = 3|Aut(G)| = 48.$ However for a general graph finding the number of non-equivalent labelings seems to be challenging.
\end{example}
\subsection{Partial solution to the problem by Rao et al.\cite{Rao2004}}
\noindent From \cite{MR29492}, one can see  that the automorphism group of the graph $2G = Aut(G)\wr S_2$, the external wreath product of $Aut(G)$ with $S_2$. Now, since $C_n$ , $n \neq 4$ is a prime graph with respect to the Cartesian product of graphs, by using Theorem \ref{Wrea}, we have
\begin{equation*}
Aut(C_n \Box C_n) = D_{2n} \wr S_2,
\end{equation*}
where $D_{2n}$ is the dihedral group of order $2n$, and $S_2$ is the symmetric group on 2 elements. Note that $D_{2n} \wr S_2$ is the group of order $8n^2$. Thus one can conclude that the number of distance magic labelings of the graph $C_n \Box C_n$ is $8kn^2$, for some $k\geq 1$ and $n\equiv 2 \bmod 4$.
\par  \vskip .1cm \noindent Similarly, when $m$ is odd, for the graph $C_m \Box C_{2m}$, 
\begin{equation*}
Aut(C_m \Box C_{2m}) = Aut(C_m) \times Aut(C_{2m}).
\end{equation*}
Therefore, the total number of distance magic labelings will be $8km^2$, for some $k \geq 1$.

\par Table \ref{tabcount1} lists the number of distance magic labelings of all distance magic graphs with $p \leq 9$ with $k=1$. Table \ref{tabcount2} lists the number of distance magic labelings of all distance magic graphs with $p \leq 9$ with $k=2$. Table \ref{tabcount3} lists the number of distance magic labelings of all distance magic graphs with $p \leq 9$ with $k\geq3$. The complete list of such graphs can be obtained from \cite{yasin}. A brute force algorithm is used to verify all these numbers.
\begin{table}[htbp]
	\centering
	\caption{ Number of distance magic labelings of graphs up to 9 vertices, \& $k=1$}
	\label{tabcount1}
	\begin{tabular}{llll}
		\hline\noalign{\smallskip}
		Graph $G$ ~~~~~~~~& Regular &$|Aut(G)|$~~~~~~~~ & $|\mathcal{F}^d|$~~~~~~~~  \\
		\noalign{\smallskip}\hline\noalign{\smallskip}
		$P_3$& No& 2&2\\
		$C_4$ & Yes& 8 & 8\\
		$C_6^2$ & Yes&48 & 48\\
		$C_6^2+K_1$ & No&48 & 48 \\
		$C_4+\bar{K}_4$ & Yes& 192 & 192 \\
		$ C_8^3$ & Yes&384  &  384  \\
		$ C_8^3+K_1$ & No&384  &  384  \\
		$(K_2\cup 2K_1)+\bar{K}_5$ & No&960  & 960 \\
		$K_{2,3}+\bar{K}_4$& No&288& 288\\
		\noalign{\smallskip}\hline
	\end{tabular}
\end{table} 
\begin{table}[htbp]
	\centering
	\caption{ Number of distance magic labelings of graphs up to 9 vertices with $k=2$}
	\label{tabcount2}
	\begin{tabular}{llll}
		\hline\noalign{\smallskip}
		Graph $G$~~~~~~~~& Regular &$|Aut(G)|$~~~~~~~~ & $|\mathcal{F}^d|$~~~~~~~~  \\
		\noalign{\smallskip}\hline\noalign{\smallskip}
		$K_{3,3,3}$ & Yes&1296 & 2592\\	
		\noalign{\smallskip}\hline
	\end{tabular}
\end{table} 
\begin{table}[htbp]
	\centering
	\caption{ Number of distance magic labelings of graphs up to 9 vertices with $k\geq3$}
	\label{tabcount3}
	\begin{tabular}{llll}
		\hline\noalign{\smallskip}
		Graph $G$~~~~~~~~& Regular &$|Aut(G)|$~~~~~~~~ & $|\mathcal{F}^d|$~~~~~~~~  \\
		\noalign{\smallskip}\hline\noalign{\smallskip}
		$C_4 \cup P_3$ & No& 16  & 48  \\
		$K_{3,4}$ & No&144  & 576  \\
		$2C_4$ & Yes& 128  &  384  \\
		$K_{3,5} $& No &720  &  2160  \\
		$K_{4,4}$ & Yes& 1152  & 4608 \\
		$G^*$ & No&32 & 384			\\			
		\noalign{\smallskip}\hline
	\end{tabular}
\end{table} 
\begin{figure}[htbp]
	\centering
	\includegraphics[width=40mm]{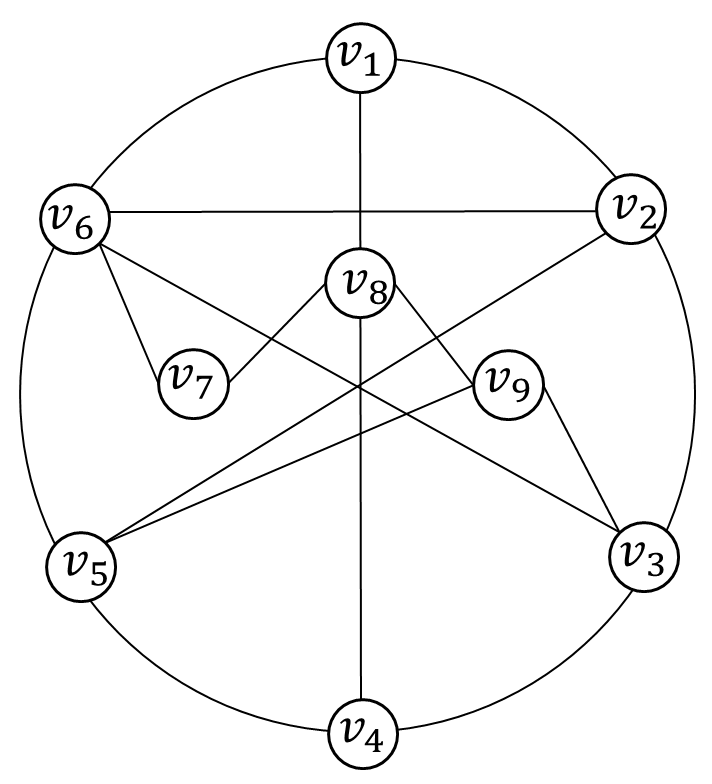}
	\caption{Graph $G^*$}
	\label{Graph $G^*$}
\end{figure}
\section{Conclusion and Future Scope}
\noindent The manuscript gives a partial solution to the problem posted by Rao et al.\cite{Rao2004} on finding all the distance magic labelings of $C_n \Box C_n$, where $n \equiv 2 \mod 4.$ Also the count for the number of distance magic labeling for the graph $C_m \Box C_{2m}$, where $m$ is odd and $m \geq3$ is also provided.  Also we predict an appropriate count of the distance magic labelings for a general distance magic graph to be a multiple of the order of a graph's automorphism group. However the problem to find the number of non equivalent (non-similar, some suitable word) labelings is still open for an arbitrary distance magic graph.

% \bibliographystyle{IEEEtran}
% %\bibliographystyle{plainnat} % use this to have URLs listed in References
% \bibliography{references}
\end{document}